\providecommand{\keywords}[1]{\textbf{\textit{Keywords.}} #1}
\providecommand{\AMSclass}[1]{\textbf{\textit{AMS classification.}} #1}
\newcommand{\tun}{\begin{tikzpicture}[line cap=round,line join=round,>=triangle 45,x=0.5cm,y=0.5cm]
\clip(-0.2,-0.1) rectangle (0.2,0.2);
\begin{scriptsize}
\draw [fill=black] (0.,0.) circle (1pt);
\end{scriptsize}
\end{tikzpicture}}
\newcommand{\tdeux}{\begin{tikzpicture}[line cap=round,line join=round,>=triangle 45,x=0.5cm,y=0.5cm]
\clip(-.2,-.1) rectangle (0.2,0.7);
\draw [line width=.5pt] (0.,0.5)-- (0.,0.);
\begin{scriptsize}
\draw [fill=black] (0.,0.) circle (1pt);
\draw [fill=black] (0.,0.5) circle (1pt);
\end{scriptsize}
\end{tikzpicture}}
\newcommand{\ttroisun}{\begin{tikzpicture}[line cap=round,line join=round,>=triangle 45,x=0.5cm,y=0.5cm]
\clip(-0.5,-0.1) rectangle (0.5,0.7);
\draw [line width=0.5pt] (0.,0.)-- (-0.3,0.5);
\draw [line width=0.5pt] (0.,0.)-- (0.3,0.5);
\begin{scriptsize}
\draw [fill=black] (-0.3,0.5) circle (1pt);
\draw [fill=black] (0.,0.) circle (1pt);
\draw [fill=black] (0.3,0.5) circle (1pt);
\end{scriptsize}
\end{tikzpicture}}
\newcommand{\ttroisdeux}{\begin{tikzpicture}[line cap=round,line join=round,>=triangle 45,x=0.5cm,y=0.5cm]
\clip(-.2,-.1) rectangle (0.2,1.2);
\draw [line width=0.5pt] (0.,0.5)-- (0.,0.);
\draw [line width=0.5pt] (0.,0.5)-- (0.,1.);
\begin{scriptsize}
\draw [fill=black] (0.,0.) circle (1pt);
\draw [fill=black] (0.,0.5) circle (1pt);
\draw [fill=black] (0.,1.) circle (1pt);
\end{scriptsize}
\end{tikzpicture}}
\newcommand{\tquatreun}{\begin{tikzpicture}[line cap=round,line join=round,>=triangle 45,x=0.5cm,y=0.5cm]
\clip(-0.5,-0.1) rectangle (0.5,0.7);
\draw [line width=0.5pt] (0.,0.)-- (-0.3,0.5);
\draw [line width=0.5pt] (0.,0.)-- (0.3,0.5);
\draw [line width=0.5pt] (0.,0.)-- (0.,0.5);
\begin{scriptsize}
\draw [fill=black] (-0.3,0.5) circle (1.0pt);
\draw [fill=black] (0.,0.) circle (1.0pt);
\draw [fill=black] (0.3,0.5) circle (1.0pt);
\draw [fill=black] (0.,0.5) circle (1.0pt);
\end{scriptsize}
\end{tikzpicture}}
\newcommand{\tquatredeux}{\begin{tikzpicture}[line cap=round,line join=round,>=triangle 45,x=0.5cm,y=0.5cm]
\clip(-0.5,-0.1) rectangle (0.5,1.2);
\draw [line width=0.5pt] (0.,0.)-- (-0.3,0.5);
\draw [line width=0.5pt] (0.,0.)-- (0.3,0.5);
\draw [line width=0.5pt] (-0.3,0.5)-- (-0.3,1.);
\begin{scriptsize}
\draw [fill=black] (-0.3,0.5) circle (1.0pt);
\draw [fill=black] (0.,0.) circle (1.0pt);
\draw [fill=black] (0.3,0.5) circle (1.0pt);
\draw [fill=black] (-0.3,1.) circle (1.0pt);
\end{scriptsize}
\end{tikzpicture}}
\newcommand{\tquatretrois}{\begin{tikzpicture}[line cap=round,line join=round,>=triangle 45,x=0.5cm,y=0.5cm]
\clip(-0.5,-0.1) rectangle (0.5,1.2);
\draw [line width=0.5pt] (0.,0.)-- (-0.3,0.5);
\draw [line width=0.5pt] (0.,0.)-- (0.3,0.5);
\draw [line width=0.5pt] (0.3,0.5)-- (0.3,1.);
\begin{scriptsize}
\draw [fill=black] (-0.3,0.5) circle (1.0pt);
\draw [fill=black] (0.,0.) circle (1.0pt);
\draw [fill=black] (0.3,0.5) circle (1.0pt);
\draw [fill=black] (0.3,1.) circle (1.0pt);
\end{scriptsize}
\end{tikzpicture}}
\newcommand{\tquatrequatre}{\begin{tikzpicture}[line cap=round,line join=round,>=triangle 45,x=0.5cm,y=0.5cm]
\clip(-0.5,-0.1) rectangle (0.5,1.7);
\draw [line width=0.5pt] (0.,0.)-- (0.,0.5);
\draw [line width=0.5pt] (0.,0.5)-- (0.3,1.);
\draw [line width=0.5pt] (0.,0.5)-- (-0.3,1.);
\begin{scriptsize}
\draw [fill=black] (0.,0.) circle (1.0pt);
\draw [fill=black] (0.,0.5) circle (1.0pt);
\draw [fill=black] (-0.3,1.) circle (1.0pt);
\draw [fill=black] (0.3,1.) circle (1.0pt);
\end{scriptsize}
\end{tikzpicture}}
\newcommand{\tquatrecinq}{\begin{tikzpicture}[line cap=round,line join=round,>=triangle 45,x=0.5cm,y=0.5cm]
\clip(-.2,-.1) rectangle (0.5,1.7);
\draw [line width=0.5pt] (0.,0.)-- (0.,0.5);
\draw [line width=0.5pt] (0.,0.5)-- (0.,1.);
\draw [line width=0.5pt] (0.,1.)-- (0.,1.5);
\begin{scriptsize}
\draw [fill=black] (0.,0.) circle (1.0pt);
\draw [fill=black] (0.,0.5) circle (1.0pt);
\draw [fill=black] (0.,1.) circle (1.0pt);
\draw [fill=black] (0.,1.5) circle (1.0pt);
\end{scriptsize}
\end{tikzpicture}}
\title{The antipode of of a Com-PreLie Hopf algebra}
\date{}
\author{Lo\"\i c Foissy}
\affil{\small{Univ. Littoral Côte d'Opale, UR 2597
LMPA, Laboratoire de Mathématiques Pures et Appliquées Joseph Liouville
F-62100 Calais, France}.\\ Email: \texttt{foissy@univ-littoral.fr}}
\theoremstyle{plain}
\newtheorem{theo}{Theorem}[section]
\newtheorem{lemma}[theo]{Lemma}
\newtheorem{prop}[theo]{Proposition}
\newtheorem{defi}[theo]{Definition}
\theoremstyle{remark}
\newtheorem{remark}{Remark}[section]
\newtheorem{notation}{Notations}[section]
\newtheorem{example}{Example}[section]
\newcommand{\K}{\mathbb{K}}
\newcommand{\N}{\mathbb{N}}
\newcommand{\id}{\mathrm{Id}}
\newcommand{\tdelta}{\tilde{\Delta}}
\newcommand{\F}{\mathbb{F}}
\renewcommand{\ker}{\mathrm{Ker}}
\begin{document}

\maketitle

\begin{abstract}
We study the compatibility between the antipode and the preLie product of a Com-PreLie Hopf algebra, that is to say a commutative Hopf algebra with a complementary preLie product, 
compatible with the product and the coproduct in a certain sense. An example of such a Hopf algebra is the Connes-Kreimer Hopf algebra, with the preLie product given by grafting
of forests, extending the free preLie product of grafting of rooted trees. This compatibility is then used to study the antipode of the Connes-Moscovici subalgebra, which
can be defined with the help of this preLie product. The antipode of the generators of this subalgebra gives a family of combinatorial coefficients indexed by partitions,
which can be computed with the help of iterated harmonic sums. 
\end{abstract}

\keywords{preLie algebra; Connes-Kreimer Hopf algebra; Connes-Moscovici Hopf algebra; iterated harmonic sums}\\

\AMSclass{16T05 16T30 05C05 16W25 17A30}

\tableofcontents

\section*{Introduction}

PreLie algebras, also called Vinberg algebras or symmetric algebras (see \cite{Cartier2010,Manchon2011} for surveys on these objects) are particular Lie algebras, where the Lie bracket
is obtained by the antisymmetrization of a non associative product $\bullet$ such that
\begin{align*}
&\forall a,b,c\in A,&(a\bullet b)\bullet c-a\bullet(b\bullet c)&=(a\bullet c)\bullet b-a\bullet(c\bullet b).
\end{align*}
An easy and classical exercise proves that if this axiom is satisfied, the antisymmetrization of $\bullet$ indeed satisfies the Jacobi identity. A classical example is given by the Lie algebra of derivations 
of the polynomial algebra $\K[X]$, with the preLie product defined by
\begin{align*}
&\forall P,Q\in \K[X],&P\frac{\mathrm{d}X}{\mathrm{d}X}\bullet Q \frac{\mathrm{d}X}{\mathrm{d}X}&=-\frac{\mathrm{d}P}{\mathrm{d}X}Q \frac{\mathrm{d}X}{\mathrm{d}X}.
\end{align*}
A remarkable property of these particular Lie algebras is that an explicit description of their enveloping algebra can be done, under the name of Guin-Oudom's construction \cite{Oudom2005}. 
They are closely related to the combinatorics of rooted trees \cite{Chapoton2001}, and the enveloping algebra of the free preLie algebra is the Grossman-Larson Hopf algebra \cite{Grossman89,Grossman90},
dual to the Connes-Kreimer Hopf algebra \cite{Connes98}.

In \cite{Mansuy2013}, a new type of preLie algebras, called Com-PreLie algebras are introduced, see Definition \ref{defi1.1}. 
They also appear in \cite{Foissy28,Foissy30}, as structures appearing on the underlying Lie algebra of a group of Fliess operators. 
Such an object is given a preLie product $\bullet$ and a commutative, associative product $\cdot$, with a Leibniz-style compatibility between them:
\begin{align*}
&\forall a,b,c\in A,&(a\cdot b)\bullet c&=(a\bullet c)\cdot b+a\cdot (b\bullet c).
\end{align*}
Examples on trees are given in \cite{Mansuy2013}, on words in \cite{Foissy30,Foissy28,Foissy55}, and other combinatorial examples can be found in \cite{Foissy56}. 
Among them, the Connes-Kreimer Hopf algebra $H_{CK}$ inherits a complementary Com-PreLie structure. Let us give a few more details on this object. 
A basis of $H_{CK}$ is given by the set of rooted forests: 
\begin{align*}
&\left\{\begin{array}{c}
1,\tun,\tun\tun,\tdeux,\tun\tun\tun,\tdeux\tun,\ttroisun,\ttroisdeux,\\
\tun\tun\tun\tun,\tdeux\tun\tun,\tdeux\tdeux,\ttroisun\tun,\ttroisdeux\tun,\tquatreun,\tquatredeux,\tquatrequatre,\tquatrecinq\ldots
\end{array}\right\}
\end{align*}
Its commutative associative product is given by the disjoint union, for example:
\begin{align*}
\tdeux \cdot \tun \ttroisun&=\tdeux\tun\ttroisun,&\tun\tun\cdot \tquatreun&=\tun\tun\tquatreun.
\end{align*}
Its preLie product is given by graftings, for example:
\begin{align*}
\tdeux\tun\bullet \tun&=\ttroisun\tun+\ttroisdeux\tun+\tdeux\tdeux,&\ttroisdeux\bullet \tun&=\tquatredeux+\tquatrequatre+\tquatrecinq,&\tdeux\bullet \tdeux&=\tquatredeux+\tquatrecinq.
\end{align*}
This preLie product includes two important operators on $H_{CK}$, the grafting $B$ and the growth $N$: 
\begin{align*}
&\forall a\in H_{CK},&B(a)&=\tun \bullet a,&N(a)&=a\bullet \tun.
\end{align*}
IAs a Hopf algebra, $H_{CK}$ has a coproduct $\Delta$, consisting of separating rooted forests into a upper part (on the right) and a lower part (on the left), for example:
\begin{align*}
\Delta(\tquatredeux)&=\tquatredeux\otimes 1+\ttroisun\otimes \tun+\ttroisdeux\otimes\tun+\tdeux\otimes \tdeux+\tdeux\otimes \tun\tun+\tun\otimes \tdeux\tun+1\otimes \tquatredeux,\\
\Delta(\tquatreun)&=\tquatreun\otimes 1+3\ttroisun\otimes \tun+3\tdeux\otimes \tun\tun+\tun\otimes \tun\tun\tun+1\otimes \tquatreun.
\end{align*}
This coproduct satisfies the following compatibility with the preLie product:
\begin{align*}
&\forall a,b\in H_{CK},&\Delta(a\bullet b)&=a^{(1)}\otimes a^{(2)}\bullet b+a^{(1)}\bullet b^{(1)}\otimes a^{(2)} b^{(2)},
\end{align*}
with Sweedler's notation. This includes the well-known compatibilities \cite{Connes98}:
\begin{align*}
&\forall a\in H_{CK},&\Delta\circ B(a)&=1\otimes B(a)+ (B\otimes \id)\circ \Delta(a),\\
&&\Delta\circ N(a)&=N(a^{(1)})\otimes a^{(2)}+a^{(1)}\bullet 1\otimes a^{(2)}\tun.
\end{align*}
Such a family $(A,m,\bullet,\Delta)$ is a Com-PreLie bialgebra, and if $(A,m,\Delta)$ is a Hopf algebra, a Com-PreLie Hopf algebra. \\

Our first aim here is a study of the compatibility between the antipode $S$ of a Com-PreLie bialgebra and its preLie product. Keeping in mind the Connes-Kreimer case, we restrict ourselves to the case
where the bialgebra $(A,m,\Delta)$ is conilpotent, and we prove that 
\begin{align*}
&\forall a,b\in A,&S(a\bullet b)&=\left(S(a)\bullet b^{(1)}\right)S(b^{(2)}).
\end{align*}

Secondly, we use this result to study the antipode of the Connes-Moscovici Hopf subalgebra $H_{CM}$ of $H_{CK}$ \cite{Connes98}. With the preceding notation, $H_{CM}$ is the subalgebra generated by the elements
\[\delta_n=\underbrace{(\ldots((\tun\bullet \tun)\bullet \tun)\ldots)\bullet \tun}_{\mbox{\scriptsize $\tun$ appears $n$ times}},\]
with $n\geq 1$. For example,
\begin{align*}
\delta_1&=\tun,&\delta_2&=\tdeux,&\delta_3&=\ttroisun+\ttroisdeux,&\delta_4&=\tquatreun+3\tquatredeux+\tquatrequatre+\tquatrecinq.
\end{align*}
This subalgebra is stable under the coproduct. For example,
\begin{align*}
\Delta(\delta_1)&=\tun\otimes 1+1\otimes \tun\\
&=\delta_1\otimes 1+1\otimes \delta_1,\\
\Delta(\delta_2)&=\tdeux \otimes 1+\tdeux \otimes 1+\tun\otimes \tun\\
&=\delta_2\otimes 1+1\otimes \delta_2+\delta_1\otimes \delta_1,\\
\Delta(\delta_3)&=(\ttroisun+\ttroisdeux)\otimes 1+1\otimes (\ttroisun+\ttroisdeux)
+3\tdeux\otimes \tun+\tun\otimes \tdeux+\tun\otimes \tun\tun\\
&=\delta_3\otimes 1+1\otimes \delta_3+3\delta_2\otimes \delta_1+\delta_1\otimes \delta_2+\delta_1\otimes \delta_1^2,\\
\Delta(\delta_4)&=(\tquatreun+3\tquatredeux+\tquatrequatre+\tquatrecinq)\otimes 1+1\otimes (\tquatreun+3\tquatredeux+\tquatrequatre+\tquatrecinq)\\
&+(6\ttroisun+6\ttroisdeux)\otimes \tun+\tdeux\otimes (4\tdeux +7\tun\tun)+\tun \otimes (\ttroisun+\ttroisdeux+3\tdeux\tun+\tun\tun\tun)\\
&=\delta_4\otimes 1+1\otimes \delta_4+6\delta_3\otimes \delta_1+
\delta_2\otimes (4\delta_2+7\delta_1^1)+\delta_1\otimes (\delta_3+3\delta_2\delta_1+\delta_1^3).
\end{align*}
Note that for any $n\geq 1$, $\Delta(\delta_n)-\delta_n\otimes 1-\delta_n\otimes 1$ makes only appear $\delta_1,\ldots,\delta_{n-1}$, which allows to inductively compute $S(\delta_n)$, as $m\circ(S\otimes \id)\circ \Delta(\delta_n)=0$ for any $n\geq 1$: for example, we obtain
\begin{align*}
S(\delta_1)&=-\delta_1,&
S(\delta_2)&=-\delta_2+\delta_1^2,&
S(\delta_3)&=-\delta_3+4\delta_1\delta_2-2\delta_1^3.
\end{align*}
More generally, one can write
\begin{align*}
S(\delta_n)&=\sum_{\substack{(i_1,\ldots,i_n) \in \N^n,\\1i_1+\ldots+ni_n=n}}a_{i_1,\ldots,i_n}\delta_1^{i_1}\ldots \delta_n^{i_n},
\end{align*}
where the coefficients $a_{i_1,\ldots,i_n}$ are integers. Our aim is to give a way to compute these coefficients.
They have been studied in \cite{Menous2011}, in a very different approach: in particular, they are split into a sum of several coefficients, corresponding to the different way to write each monomial. 
Firstly, using the compatibility between the preLie product $\bullet$ and the antipode, as for any $n\geq 1$, $\delta_{n+1}=\delta_n\bullet \tun$, we obtain the inductive formula
\[a_{i_1,\ldots,i_n}=\sum_{j\geq 2,\: i_j \geq 1}(i_{j-1}+1)a_{i_1,\ldots,i_{j-1}+1,i_j-1,\ldots,i_{n-1}}
-(n-1)\delta_{i_1\geq 1}a_{i_1-1,i_2,\ldots,i_{n-1}}.\]
We then partially solve this induction. We prove the existence of a sequence of polynomials $P_{i_2,\ldots,i_n} \in \mathbb{Z}[X_1,\ldots,X_N]$, indexed by sequences of integers $(i_2,\ldots,i_k)$ with $i_k\neq 0$, with
\[N=1i_2+\ldots+(k-1)i_k-1,\]
such that for any $i_1\geq 0$,
\[a_{i_1,\ldots,i_k,0,\ldots,0}=(-1)^{i_1+\ldots+i_k}(1i_1+\ldots+ki_k-1)!\sum_{1\leq p_1<\ldots<p_N\leq 1i_1+\ldots+ki_k-1}
\frac{P_{i_2,\ldots,i_k}(p_1,\ldots,p_N)}{p_1\ldots p_N}.\]
These polynomials are inductively defined and can be rather complicated. For example,
\begin{align*}
P_{0,1}&=X_1-1,\\
P_2&=(X_1-2)(X_1-1),\\
P_{1,1}&=(2X_1+X_2-7)(X_1-1)\\
P_{1,0,1}&=(2X_1+X_2+X_3-11)(X_1-1),\\
P_{2,1}&=(3X_1X_2+2X_1X_3+X_2X_3-22X_1-11X_2+7X_3+59)(X_1-1),
\end{align*}
which gives for any $n$ great enough,
\begin{align*}
&\forall n\geq 3,&a_{n-3,0,1,0,\ldots,0}&=(-1)^n(n-1)!\sum_{1\leq p_1 \leq n-1} \frac{p_1-1}{p_1},\\
&\forall n\geq 4,&a_{n-4,2,0,\ldots,0}&=(-1)^n (n-1)!\sum_{1\leq p_1\leq n-1}\frac{(p_1-2)(p_1-1)}{p_1},\\
&\forall n\geq 5,&a_{n-5,1,1,0,\ldots,0}&=(-1)^{n+1}(n-1)!\sum_{1\leq p_1<p_2\leq n-1}\frac{(2p_1+p_2-7)(p_1-1)}{p_1p_2},\\
&\forall n\geq 6,&a_{n-6,1,0,1,0,\ldots,0}&=(-1)^n(n-1)!\sum_{1\leq p_1<p_2<p_3\leq n-1}\frac{(2p_1+p_2+p_3-11)(p_1-1)}{p_1p_2p_3}.
\end{align*}
This implies that all these sequences $(a_{n-2i_2-\ldots-ki_k,i_2,\ldots,i_k,0,\ldots,0})_n$ can be rewritten with the help of polynomial sequences and iterated harmonic sums
\[H^{(k)}=\left(
\sum_{1\leq p_1<\ldots<p_k \leq n}\frac{1}{p_1\ldots p_k}\right)_{n\geq 1}.\]
For example,
\begin{align*}
&\forall n\geq 3,&\frac{(-1)^n}{(n-1)!}a_{n-3,0,1,\ldots,0}&=n-1-H_{n-1}^{(1)},\\
&\forall n\geq 4,&\frac{(-1)^n}{(n-1)!}a_{n-4,2,0,0,\ldots,0}&=\frac{(n-1)(n-6)}{2}+2H_{n-1}^{(1)},\\
&\forall n\geq 5,&\frac{(-1)^{n+1}}{(n-1)!}a_{n-5,1,1,0,\ldots,0}&=(n-1)(n-10)+(10-n)H_{n-1}^{(1)}+7H_{n-1}^{(2)},\\
&\forall n\geq 6,&\frac{(-1)^n}{(n-1)!}a_{n-6,3,0,\ldots,0}&=\frac{(n^2-17n+90)(n-1)}{6}+(2n-14)H^{(1)}_{n-1}-8H^{(2)}_{n-1}.
\end{align*}

\textbf{Acknowledgments}. The author acknowledges support from the grant ANR-20-CE40-0007 \emph{Combinatoire Algébrique, Résurgence, Probabilités Libres et Opérades}.\\

\begin{notation}
The base field $\K$ is a commutative field of characteristic zero. All the spaces in this article will be taken over $\K$. 
\end{notation}

\section{Com-PreLie Hopf algebras}

\begin{defi}\label{defi1.1}
\begin{enumerate}
\item A \emph{(right) Com-PreLie algebra} \cite{Foissy28,Foissy30}  is a family $A=(A,m,\bullet)$, where $A$ is a vector space,
$m$ and $\bullet$ are bilinear products on $A$, such that
\begin{align*}
&\forall a,b\in A,&ab&=ba,\\
&\forall a,b,c\in A,&(a b) c&=a (b c),\\
&\forall a,b,c\in A,&(a\bullet b)\bullet c-a\bullet(b\bullet c)&=(a\bullet c)\bullet b-a\bullet(c\bullet b)&\mbox{(preLie identity)},\\
&\forall a,b,c\in A,&(a b)\bullet c&=(a\bullet c) b+a (b\bullet c)&\mbox{(Leibniz identity)}.
\end{align*}
In particular, $(A,m)$ is an associative, commutative algebra and $(A,\bullet)$ is a (right) preLie algebra.
We shall say that $A$ is unitary if the algebra $(A,m)$ is unitary.
\item A \emph{Com-PreLie bialgebra} is a family $(A,m,\bullet,\Delta)$, such that:
\begin{enumerate}
\item $(A,m,\bullet)$ is a unitary Com-PreLie algebra.
\item $(A,m,\Delta)$ is a bialgebra.
\item $\Delta$ and $\bullet$ are compatible, in the following sense:
\begin{align*}
&\forall a,b\in A,&\Delta(a\bullet b)&=a^{(1)}\otimes a^{(2)}\bullet b+a^{(1)}\bullet b^{(1)}\otimes a^{(2)} b^{(2)},
\end{align*}
with Sweedler's notation $\Delta(x)=x^{(1)}\otimes x^{(2)}$.
\end{enumerate}
If $(A,m,\Delta)$ is a Hopf algebra, of antipode denoted by $S$, we shall say that $(A,m,\bullet,\Delta)$ is a Com-PreLie Hopf algebra. 
\end{enumerate}\end{defi}

\begin{lemma}
Let $(A,m,\bullet,\Delta)$ be a Com-PreLie bialgebra, of counit $\varepsilon$ and of unit 1. For any $b\in A$,
$1\bullet b=0$. 
For any $a,b\in A$, $\varepsilon(a\bullet b)=0$.
\end{lemma}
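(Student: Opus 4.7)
The lemma contains two independent statements that I would handle separately, both by short direct computations exploiting the defining axioms.

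For the identity $1\bullet b=0$, the plan is to use the Leibniz compatibility between the commutative product and the preLie product with $a=b=1$. Since $1\cdot 1=1$, the Leibniz axiom gives
\[
1\bullet b \;=\; (1\cdot 1)\bullet b \;=\; (1\bullet b)\cdot 1 + 1\cdot(1\bullet b) \;=\; 2\,(1\bullet b),
\]
and subtracting one copy yields $1\bullet b=0$. This is the standard trick for unital derivation-type rules and should be one line.

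For $\varepsilon(a\bullet b)=0$, the plan is to apply $\varepsilon\otimes\varepsilon$ to the compatibility relation between $\Delta$ and $\bullet$. Recall that since $(A,m,\Delta)$ is a bialgebra, $\varepsilon$ is an algebra morphism, so $\varepsilon(xy)=\varepsilon(x)\varepsilon(y)$; moreover, from the counit axiom $a^{(1)}\varepsilon(a^{(2)})=a$ one deduces $(\varepsilon\otimes\varepsilon)\circ\Delta=\varepsilon$. Applying $\varepsilon\otimes\varepsilon$ to
\[
\Delta(a\bullet b)=a^{(1)}\otimes a^{(2)}\bullet b+a^{(1)}\bullet b^{(1)}\otimes a^{(2)} b^{(2)}
\]
therefore gives
\[
\varepsilon(a\bullet b)=\varepsilon(a^{(1)})\varepsilon(a^{(2)}\bullet b)+\varepsilon(a^{(1)}\bullet b^{(1)})\varepsilon(a^{(2)})\varepsilon(b^{(2)}).
\]
Using bilinearity of $\bullet$, the counit axiom on each tensor factor collapses the first summand to $\varepsilon(a\bullet b)$ and the second to $\varepsilon(a\bullet b)$, yielding $\varepsilon(a\bullet b)=2\varepsilon(a\bullet b)$, hence $\varepsilon(a\bullet b)=0$.

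I do not foresee a serious obstacle: both parts are essentially a two-line manipulation of the axioms. The only subtle point is the justification of $(\varepsilon\otimes\varepsilon)\circ\Delta=\varepsilon$ and the clean use of bilinearity/Sweedler notation to conclude $\varepsilon(a^{(1)}\bullet b^{(1)})\varepsilon(a^{(2)})\varepsilon(b^{(2)})=\varepsilon(a\bullet b)$, which amounts to pulling the scalar factors inside the preLie product.
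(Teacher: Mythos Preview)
Your proposal is correct and follows exactly the same approach as the paper: the Leibniz identity applied to $1=1\cdot 1$ for the first part, and applying $\varepsilon\otimes\varepsilon$ to the coproduct compatibility for the second. Your added remarks on why $(\varepsilon\otimes\varepsilon)\circ\Delta=\varepsilon$ and how the Sweedler factors collapse are the only differences, and they simply make explicit what the paper leaves implicit.
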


\begin{proof}
For any $b\in A$,
\[1\bullet b=(1\cdot 1)\bullet b= (1\bullet b)\cdot 1+1\cdot(1\bullet b)=2 (1\bullet b),\]
so $1\bullet b=0$. Let $a,b\in A$.
\begin{align*}
\varepsilon(a\bullet b)&=(\varepsilon\otimes \varepsilon)\circ \Delta(a\bullet b)\\
&=\varepsilon\left(a^{(1)}\right)\varepsilon\left(a^{(2)}\bullet b\right)+\varepsilon\left(a^{(1)}\bullet b^{(1)}\right)\varepsilon\left(a^{(2)}\right)\varepsilon\left(b^{(2)}\right)\\
&=\varepsilon(a\bullet b)+\varepsilon(a\bullet b),
\end{align*}
so $\varepsilon(a\bullet b)=0$. 
\end{proof}

\begin{notation}
Let $(A,m,\Delta)$ be a bialgebra. Its augmentation ideal, that it to say the kernel of its counit $\varepsilon_A$, is denoted by $A_+$. It is given a coassociative, not necessarily counitary coproduct 
\[\tdelta:\left\{\begin{array}{rcl}
A_+&\longrightarrow&A_+\otimes A_+\\
a&\longmapsto&\Delta(a)-a\otimes 1_A-1_A\otimes a.
\end{array}
\right.\]
As it is coassociative, we can without any problem iterate this coproduct $\tdelta$: 
\begin{align*}
&\forall n\in \N,&\tdelta^{(n)}&=\begin{cases}
\id_{A_+}\mbox{ if }n=0,\\
\left(\tdelta \otimes \id_{A_+}^{\otimes (n-1)}\right)\circ \tdelta^{(n-1)}\mbox{ if }n\geq 1.
\end{cases}\end{align*}\end{notation}

We shall say that the bialgebra $A$ is conilpotent if for any $a\in A_+$, there exists $N(a)\geq 1$ such that $\tdelta^{(N(a))}(a)=0$. If this holds, then $(A,m,\Delta)$ is a Hopf algebra,
and its antipode is given by Takeuchi's formula \cite{Takeuchi1971}:
\begin{align*}
&\forall a\in A,&S(a)&=\sum_{n=0}^{N(a)-1} (-1)^{n+1}m_n\circ \tdelta^{(n-1)}(a),
\end{align*}
where for any $n\geq 0$,
\[m_n:\left\{\begin{array}{rcl}
A_+^{\otimes n}&\longrightarrow&A\\
a_1\otimes \ldots \otimes a_n&\longmapsto&a_1\cdot \ldots \cdot a_n.
\end{array}\right.\]

\begin{theo}\label{theo1.3}
Let $(A,m,\bullet,\Delta)$ be a conilpotent Com-PreLie Hopf algebra. Then for any $a,b \in A$,
\[S(a\bullet b)=\left(S(a)\bullet b^{(1)}\right)S(b^{(2)}).\]
\end{theo}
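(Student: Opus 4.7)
The plan is to fix $b\in A$ and show that the linear map $\phi\colon A\to A$ defined by
\[\phi(a)=\bigl(S(a)\bullet b^{(1)}\bigr)\,S(b^{(2)})-S(a\bullet b)\]
vanishes identically, via a convolution trick in $\mathrm{End}(A)$. Since $S$ is the convolution inverse of $\id$, any $\phi$ satisfying $\id*\phi=0$ must already be zero: indeed $\phi=\eta\varepsilon*\phi=(S*\id)*\phi=S*(\id*\phi)=0$. So the whole argument reduces to checking the identity
\[\sum a^{(1)}\,S(a^{(2)}\bullet b)=\sum a^{(1)}\,\bigl(S(a^{(2)})\bullet b^{(1)}\bigr)\,S(b^{(2)}).\]
Conilpotency plays no role in this reduction; it is only used upstream to guarantee the existence of $S$.

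I will compute the two sides separately and show they agree. For the left-hand side, I apply $m\circ(\id\otimes S)\circ\Delta$ to $a\bullet b$; using the compatibility of $\Delta$ with $\bullet$ from Definition \ref{defi1.1} together with $\varepsilon(a\bullet b)=0$ from the preceding Lemma, this yields
\[\sum a^{(1)}\,S(a^{(2)}\bullet b)=-\sum\bigl(a^{(1)}\bullet b^{(1)}\bigr)\,S(a^{(2)}b^{(2)}).\]

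For the right-hand side, I rewrite each product $a^{(1)}\bigl(S(a^{(2)})\bullet b^{(1)}\bigr)$ using the Leibniz identity in the form $x(y\bullet z)=(xy)\bullet z-(x\bullet z)\,y$, taken with $x=a^{(1)}$, $y=S(a^{(2)})$, $z=b^{(1)}$. Summing over the Sweedler factors of $a$, the $(xy)\bullet z$ contribution collapses to $\varepsilon(a)\,(1\bullet b^{(1)})=0$, thanks to $\sum a^{(1)}S(a^{(2)})=\varepsilon(a)\cdot 1$ together with $1\bullet x=0$ (Lemma again). Multiplying the surviving term by $S(b^{(2)})$ and invoking commutativity of $m$ to merge $S(a^{(2)})\,S(b^{(2)})=S(a^{(2)}b^{(2)})$ produces exactly $-\sum(a^{(1)}\bullet b^{(1)})\,S(a^{(2)}b^{(2)})$, matching the expression obtained for the left-hand side.

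The only creative step is spotting the convolution shortcut; once one sees that it suffices to prove the two sides agree after a single convolution with $\id$, the remaining check is a short formal manipulation of Sweedler indices using only the two identities $1\bullet x=0$ and $\varepsilon(a\bullet b)=0$ from the Lemma, together with commutativity of $m$ and the preLie Leibniz axiom. A direct induction along the conilpotent filtration via Takeuchi's formula would be considerably more laborious and obscure the symmetry driving the computation.
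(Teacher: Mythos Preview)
Your proof is correct and substantially cleaner than the paper's. The paper proceeds by a two-stage induction along the conilpotent filtration: it first shows $S(a\bullet 1)=S(a)\bullet 1$ by induction on the degree of $a$, and then handles general $b\in A_+$ by induction on $\deg(a)+\deg(b)$, each step involving a lengthy expansion of $\Delta(a\bullet b)$ in terms of the reduced coproduct $\tdelta$ and careful regrouping of eight summands. Your convolution argument bypasses all of this: by showing $\id*\phi=0$ and left-convolving with $S$, you reduce the identity to a single Sweedler computation using only the Leibniz axiom, $1\bullet x=0$, $\varepsilon(x\bullet y)=0$, and $S(xy)=S(x)S(y)$ (the last from commutativity). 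The paper's approach has the minor advantage of being self-contained from Takeuchi's formula without invoking that $S$ is an algebra (anti)morphism, but yours makes transparent that the result holds in any Com-PreLie Hopf algebra, not just conilpotent ones --- as you correctly observe, conilpotency is used only to guarantee that $S$ exists.
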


\begin{proof}
For any $b\in A$, as $1\bullet b=0$,
\[S(1\bullet b)=0=\left(S(1)\bullet b^{(1)}\right)S(b^{(2)}).\]
So the result hods if $a=1$: we can restrict ourselves to prove our result in the case where $a\in A_+=\ker(\varepsilon)$.

We put $A_n=\ker\left(\tdelta^{(n)}\right)$. As the coalgebra $(A,\Delta)$ is conilpotent, we obtain that $(A_n)_{n\geq 0}$ is an increasing filtration of $A_+$, with $A_0=(0)$. 
Consequently, for any $a\in A_+$, we put
\[\deg(a)=\min(n\in \N\mid a\in A_n).\]
We shall use Sweedler's notation $\tdelta(a)=a'\otimes a''$. Note that if $a\in A_n$, then 
\[\tdelta(a)\in A_{n-1}\otimes A_{n-1}.\]
Moreover, for any $a\in A_+$,
\begin{align*}
0&=\varepsilon(a)=m\circ (S\otimes \id)\circ \Delta(a)=S(a)+a+S(a')a'',
\end{align*}
so $S(a)=-a-S(a')a''$.\\

Let us prove that $S(a\bullet 1)=S(a)\bullet 1$ for any $a\in A_+$ by induction on $\deg(a)$. If $\deg(a)=0$, then $a=0$ and this is obvious. 
Let us assume the result at all ranks $<\deg(a)$. Then
\begin{align*}
\Delta(a\bullet 1)&=a\otimes 1\bullet 1+1\otimes a\bullet 1+a'\otimes a''\bullet 1+a\bullet 1\otimes 1+1\bullet 1\otimes a+a'\bullet 1\otimes a''\\
&=1\otimes a\bullet 1+a'\otimes a''\bullet 1+a\bullet 1\otimes 1+a'\bullet 1\otimes a'',
\end{align*}
so, with the induction hypothesis applied to $a'$,
\begin{align*}
0&=\varepsilon(a\bullet 1)\\
&=m\circ (S\otimes \id)\circ \Delta(a\bullet 1)\\
&=a\bullet 1+S(a')(a''\bullet 1)+S(a\bullet 1)+S(a'\bullet 1)a''\\
&=a\bullet 1+S(a')(a''\bullet 1)+(S(a')\bullet 1)a''+S(a\bullet 1)\\
&=a\bullet 1+(S(a')a'')\bullet 1+S(a\bullet 1)\\
&=(a+S(a')a'')\bullet 1+S(a\bullet 1)\\
&=-S(a)\bullet 1+S(a\bullet 1),
\end{align*}
so $S(a\bullet 1)=S(a)\bullet 1$. This concludes the induction. As a consequence, we can restrict ourselves to the case where $a,b\in A_+$. \\

Let us finally prove the result when $a,b\in A_+$ by induction on $N=\deg(a)+\deg(b)$. If $N\leq 1$, then $a=0$ or $b=0$ and the result is obvious. Let us assume the result at all ranks $<N$.
\begin{align*}
\Delta(a\bullet b)&=a\otimes 1\bullet b+1\otimes a\bullet b+a'\otimes a''\bullet b+a\bullet 1\otimes b+1\bullet 1\otimes b+a'\bullet 1\otimes a''b\\
&+a\bullet b\otimes 1+1\bullet b\otimes a+a'\bullet b\otimes a''+a\bullet b'\otimes b''+1\bullet b'\otimes ab''+a'\bullet b'\otimes a''b''\\
&=1\otimes a\bullet b+a\bullet b\otimes 1+a'\otimes a''\bullet b+a\bullet 1\otimes b+a'\bullet 1\otimes a''b+a'\bullet b\otimes a''\\
&+a\bullet b'\otimes b''+a'\bullet b'\otimes a''b''.
\end{align*}
Therefore, using the induction hypothesis on $a\bullet b$, $a'\bullet b'$ and $a\bullet b'$, with Sweedler's notation
\[(\tdelta \otimes \id)\circ \tdelta(b)=(\id \otimes \tdelta)\circ \tdelta(b)=b'\otimes b''\otimes b''',\]
\begin{align*}
0&=\varepsilon(a\bullet b)\\
&=m\circ(S\otimes \id)\circ \Delta(a\bullet b)\\
&=a\bullet b+S(a\bullet b)+S(a')(a''\bullet b)+(S(a)\bullet b')b''+(S(a)\bullet 1)S(b')b''+(S(a)\bullet b')S(b'')b'''\\
&+(S(a)\bullet 1)b+(S(a')\bullet b)a''+(S(a')\bullet 1)S(b)a''+(S(a')\bullet b')S(b'')a''+(S(a')\bullet b')a''b''\\
&+(S(a')\bullet 1)S(b')a''b''+(S(a')\bullet b')S(b'')a''b'''+(S(a')\bullet 1)a''b\\
&=a\bullet b+S(a\bullet b)+S(a')(a''\bullet b)\\
&+(S(a)\bullet b')(\underbrace{b''+S(b'')b'''}_{=-S(b'')})+(S(a)\bullet 1)(\underbrace{S(b')b''+b}_{=-S(b)})+(S(a')\bullet b)a''\\
&+(S(a')\bullet 1)a''(\underbrace{S(b)+S(b')b''+b}_{=0})+(S(a')\bullet b')a''(\underbrace{S(b'')+b''+S(b'')b'''}_{=0})\\
&=a\bullet b+S(a\bullet b)+S(a')(a''\bullet b)+(S(a')\bullet b)a''-(S(a)\bullet b')S(b'')-(S(a)\bullet 1)S(b)\\
&=a\bullet b+S(a\bullet b)+(S(a')a'')\bullet b-(S(a)\bullet b')S(b'')-(S(a)\bullet 1)S(b)\\
&=(\underbrace{a+S(a')a''}_{=-S(a)})\bullet b+S(a\bullet b)-(S(a)\bullet b')S(b'')-(S(a)\bullet 1)S(b)\\
&=S(a\bullet b)-(S(a)\bullet b^{(1)})S(b^{(2)}),
\end{align*}
so the result holds for $a\bullet b$. 
\end{proof}

\section{Application to the Connes-Moscovici subalgebra}

The Connes-Kreimer Hopf $H_{CK}$ algebra is given a preLie product by graftings \cite{Foissy56}. Let us recall first a few definitions.
If $F$ and $G$ are two rooted forests and $v\in V(F)$, then $F\bullet_v G$ is the rooted forest obtained from $FG$ by adding an edge from $v$ to any root of $G$.
We then define the preLie product $\bullet$ by
\begin{align*}
&\forall F,G\in \F,&F\bullet G&=\sum_{v\in V(F)} F\bullet_v G.
\end{align*}
For example, 
\begin{align*}
\tun \bullet \tun\tun\tun&=\tquatreun,&\tun\tun\tun\bullet \tun&=\tdeux\tun\tun+\tun\tdeux\tun+\tun\tun\tdeux=3\tdeux\tun \tun,\\
\tun\bullet \tdeux\tun&=\tquatredeux,&\tdeux\tun\bullet \tun&=\ttroisun\tun+\ttroisdeux\tun+\tdeux\tdeux,\\
\tun\bullet \ttroisun&=\tquatrequatre,&\ttroisun\bullet \tun&=\tquatreun+\tquatredeux+\tquatretrois=\tquatreun+2\tquatredeux,\\
\tun\bullet \ttroisdeux&=\tquatrecinq,&\ttroisdeux\bullet \tun&=\tquatredeux+\tquatrequatre+\tquatrecinq,\\
\tdeux\bullet \tdeux&=\tquatredeux+\tquatrecinq.
\end{align*}
In particular, the grafting operator, for which $H_{CK}$ satisfies a universal property \cite{Connes98}, is
\[B:\left\{\begin{array}{rcl}
H_{CK}&\longrightarrow&H_{CK}\\
x&\longmapsto&\tun\bullet x.
\end{array}\right.\]
It associates to any rooted forest $F$ the tree obtained by adding a new vertex and graft all the roots of $F$ on it.
The growth operator used in \cite{ConnesMoscovici98} is
\[N:\left\{\begin{array}{rcl}
H_{CK}&\longrightarrow&H_{CK}\\
x&\longmapsto&x\bullet \tun.
\end{array}\right.\]
It associates to any rooted forest $F$ the sum of forests obtained by adding a new leaf to $F$, grafted on a vertex of $F$. 
The Connes-Moscovici Hopf subalgebra $H_{CM}$ is generated by the inductively defined generators $\delta_n$:
\begin{align*}
&\forall n\geq 1,&\delta_n&=\begin{cases}
\tun \mbox{ if }n=1,\\
\delta_{n-1}\bullet \tun\mbox{ if }n\geq 2.
\end{cases}
\end{align*}
Our aim here is to study the antipode of $S(\delta_n)$. Firstly, as $H_{CK}$ is graded by the number of vertices of the forests, and as the only forest of degree $0$ is $1$,
it is conilpotent as a coalgebra and we can apply Theorem \ref{theo1.3}. Secondly, as $H_{CM}$ is a Hopf subalgebra of $H_{CK}$, and by the homogeneity of the antipode, 
we can write for any $n \geq 1$, 
\begin{align*}
S(\delta_n)&=\sum_{\substack{(i_1,\ldots,i_n) \in \N^n,\\1i_1+\ldots+ni_n=n}}a_{i_1,\ldots,i_n}\delta_1^{i_1}\ldots \delta_n^{i_n}.
\end{align*}

\begin{example}
Direct computations give
\begin{align*}
S(\delta_1)&=-\delta_1,&
S(\delta_2)&=-\delta_2+\delta_1^2,&
S(\delta_3)&=-\delta_3+4\delta_1\delta_2-2\delta_1^3,
\end{align*}
so 
\begin{align*}
a_1&=-1,&a_{0,1}&=-1,&a_{0,0,1}&=-1,\\
&&a_{2,0}&=1,&a_{1,1,0}&=4,\\
&&&&a_{3,0,0}&=-2.
\end{align*}
\end{example}

Let us give an inductive way to compute these coefficients.

\begin{prop}\label{prop2.1}
Let $(i_1,\ldots,i_n)\in \N^n$, such that $1i_1+\ldots+ni_n=n$. If $i_n=1$ (and, therefore, $i_1=\ldots=i_{n-1}=0$), then $a_{i_1,\ldots,i_n}=-1$. Otherwise, $i_n=0$ and
\[a_{i_1,\ldots,i_n}=\sum_{j\geq 2,\: i_j \geq 1}(i_{j-1}+1)a_{i_1,\ldots,i_{j-1}+1,i_j-1,\ldots,i_{n-1}}
-(n-1)\delta_{i_1\geq 1}a_{i_1-1,i_2,\ldots,i_{n-1}},\]
where the symbol $\delta_{i_1\geq 1}$ takes the value 1 if $i_1\geq 1$ and $0$ if $i_1=0$.
\end{prop}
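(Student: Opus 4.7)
The plan is to apply Theorem~\ref{theo1.3} to the defining relation $\delta_n = \delta_{n-1} \bullet \delta_1$. Since $\Delta(\delta_1) = \delta_1 \otimes 1 + 1 \otimes \delta_1$, while $S(1) = 1$ and $S(\delta_1) = -\delta_1$, Theorem~\ref{theo1.3} yields
\[S(\delta_n) = \bigl(S(\delta_{n-1}) \bullet \delta_1\bigr) S(1) + \bigl(S(\delta_{n-1}) \bullet 1\bigr) S(\delta_1) = S(\delta_{n-1}) \bullet \delta_1 - \bigl(S(\delta_{n-1}) \bullet 1\bigr) \delta_1.\]

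The next observation --- special to $H_{CK}$ and not implied by the Com-PreLie axioms --- is that $x \bullet 1$ acts on homogeneous elements as multiplication by degree. For a rooted forest $F$ with $d$ vertices, the grafting definition yields $F \bullet_v 1 = F$ for each $v$ (the empty forest has no roots to graft), whence $F \bullet 1 = d \cdot F$. By linearity, and since $S$ preserves the vertex-number grading of $H_{CK}$, the element $S(\delta_{n-1})$ is homogeneous of degree $n-1$, so $S(\delta_{n-1}) \bullet 1 = (n-1) S(\delta_{n-1})$. Therefore
\[S(\delta_n) = S(\delta_{n-1}) \bullet \delta_1 - (n-1)\, S(\delta_{n-1})\, \delta_1.\]

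I would then substitute the inductive expansion $S(\delta_{n-1}) = \sum a_{j_1,\ldots,j_{n-1}} \delta_1^{j_1} \cdots \delta_{n-1}^{j_{n-1}}$ and expand each $(\delta_1^{j_1}\cdots \delta_{n-1}^{j_{n-1}}) \bullet \delta_1$ by iterating the Leibniz identity, using the definitional relation $\delta_k \bullet \delta_1 = \delta_{k+1}$, to obtain
\[\bigl(\delta_1^{j_1} \cdots \delta_{n-1}^{j_{n-1}}\bigr) \bullet \delta_1 = \sum_{k \geq 1,\, j_k \geq 1} j_k\, \delta_1^{j_1} \cdots \delta_k^{j_k - 1} \delta_{k+1}^{j_{k+1}+1} \cdots \delta_{n-1}^{j_{n-1}},\]
with the understanding that at $k = n-1$ the factor $\delta_{k+1}^{j_{k+1}+1}$ is read as $\delta_n$. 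Extracting the coefficient of $\delta_1^{i_1} \cdots \delta_n^{i_n}$ then finishes the proof: when $i_n = 1$, only the $k = n-1$, $j_{n-1} = 1$ branch contributes, giving $-1$ by the inductive assumption; when $i_n = 0$, each $k$ with $i_{k+1} \geq 1$ contributes $(i_k+1)\, a_{i_1,\ldots,i_k+1,\,i_{k+1}-1,\ldots,i_{n-1}}$ (from $j_k = i_k+1$, $j_{k+1} = i_{k+1}-1$), and the term $-(n-1)S(\delta_{n-1})\delta_1$ contributes $-(n-1)\, a_{i_1-1,i_2,\ldots,i_{n-1}}$ precisely when $i_1 \geq 1$. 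Reindexing $j = k+1$ then rewrites the first sum in the form appearing in the proposition. The main technical care is purely combinatorial bookkeeping: tracking the shift $j \leftrightarrow k+1$ and confirming that the $k = n-1$ branch of the Leibniz expansion contributes only in the separately handled $i_n = 1$ case.
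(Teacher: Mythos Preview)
Your proof is correct and follows essentially the same route as the paper: apply Theorem~\ref{theo1.3} to $\delta_n=\delta_{n-1}\bullet\tun$, use that $x\bullet 1$ multiplies by degree in $H_{CK}$ to obtain $S(\delta_n)=S(\delta_{n-1})\bullet\delta_1-(n-1)\,\delta_1\,S(\delta_{n-1})$, then expand $S(\delta_{n-1})$ in the $\delta$-monomial basis, apply the Leibniz rule with $\delta_k\bullet\delta_1=\delta_{k+1}$, and identify coefficients. The paper packages the two operators as $N(x)=x\bullet\tun$ and $D(x)=x\bullet 1$, but the argument is identical.
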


\begin{proof}
We consider the two operators $N,D:H_{CM}\longrightarrow H_{CM}$, defined by
\begin{align*}
N(x)&=x\bullet \tun,&D(x)&=x\bullet 1.
\end{align*}
Note that if $x$ is homogeneous of a certain degree $k$, then $D(x)=kx$. 
By Theorem \ref{theo1.3}, for any $x\in H_{CM}$,
\[S\circ N(x)=S(x)\bullet \tun+(S(x)\bullet 1)S(\tun)=N\circ S(x)-\tun D\circ S(x).\]
In particular, if $x$ is homogeneous of a certain degree $k$, as $S(x)$ is homogeneous of the same degree,
\[S\circ N(x)=N\circ S(x)-k\tun S(x).\]
Applying this to $x=\delta_{n-1}$, homogeneous of degree $k=n-1$,
\begin{align*}
S(\delta_n)&=\sum_{\substack{(i_1,\ldots,i_{n-1}) \in \N^{n-1},\\1i_1+\ldots+(n-1)i_{n-1}=n-1}}a_{i_1,\ldots,i_{n-1}}N(\delta_1^{i_1}\ldots \delta_{n-1}^{i_{n-1}})\\
&-(n-1)\sum_{\substack{(i_1,\ldots,i_{n-1}) \in \N^{n-1},\\1i_1+\ldots+(n-1)i_{n-1}=n-1}}a_{i_1,\ldots,i_{n-1}}\delta_1^{i_1+1}\delta_2^{i_2}\ldots \delta_{n-1}^{i_{n-1}}\\
&=\sum_{\substack{(i_1,\ldots,i_{n-1}) \in \N^{n-1},\\1i_1+\ldots+(n-1)i_{n-1}=n-1}}\sum_{j=1}^{n-1}i_j a_{i_1,\ldots,i_{n-1}}\delta_1^{i_1}\ldots \delta_j^{i_j-1} \delta_{j+1}^{i_j+1}\delta_{n-1}^{i_{n-1}}\\
&-(n-1)\sum_{\substack{(i_1,\ldots,i_{n-1}) \in \N^{n-1},\\1i_1+\ldots+(n-1)i_{n-1}=n-1}}a_{i_1,\ldots,i_{n-1}}\delta_1^{i_1+1}\delta_2^{i_2}\ldots \delta_{n-1}^{i_{n-1}}.
\end{align*}
The result follows by an identification of the coefficients.
\end{proof}

Here are a few examples, some of them being referenced in the OEIS \cite{Sloane}. 

\begin{lemma} \label{lem2.2}
\begin{align*}
&\forall n\geq 1,&a_{n,0,\ldots,0}&=(-1)^n(n-1)!,&&\mbox{\rm OEIS A000142}\\
&\forall n\geq 2,&a_{n-2,1,0,\ldots,0}&=(-1)^{n-1}(n-1)!(n-1),&&\mbox{\rm OEIS A001563}\\
&\forall n\geq 3,&a_{1,0,\ldots,0,1,0}&=\frac{n(n-1)}{2}+1,&&\mbox{\rm OEIS A152947}\\
&\forall n\geq 3,&a_{n-3,0,1,0,\ldots,0}&=(-1)^n(n-1)!\sum_{1\leq p_1 \leq n-1} \frac{p_1-1}{p_1},&&\mbox{\rm OEIS A067318}\\
&\forall n\geq 4,&a_{n-4,2,0,\ldots,0}&=(-1)^n (n-1)!\sum_{1\leq p_1\leq n-1}\frac{(p_1-2)(p_1-1)}{p_1}.
\end{align*}
\end{lemma}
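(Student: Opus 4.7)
The plan is to prove all five identities by induction on $n$, in each case applying Proposition~\ref{prop2.1} and checking that the proposed closed form satisfies the resulting recurrence together with a directly computed base case. A key initial observation simplifies the bookkeeping: for every family of tuples in the lemma, exactly one index $j\ge 2$ has $i_j\ge 1$, so the first sum in Proposition~\ref{prop2.1} reduces to a single term, and the right-hand side produces at most one quantity in the same family (at level $n-1$) plus possibly one quantity in a family already treated in an earlier row of the lemma.

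First I would dispatch the three polynomial cases. For $(n,0,\ldots,0)$ the first sum is empty and the recurrence becomes $a_{n,0,\ldots,0}=-(n-1)\,a_{n-1,0,\ldots,0}$, which inductively gives $(-1)^n(n-1)!$. For $(n-2,1,0,\ldots,0)$ the recurrence specializes to $b_n=(n-1)(-1)^{n-1}(n-2)!-(n-1)\,b_{n-1}$ (the first term being the preceding row), which an easy induction solves as $(-1)^{n-1}(n-1)!(n-1)$. For $(1,0,\ldots,0,1,0)$ the only contributing $j$ in the first sum is $j=n-1$, and the second term uses the base value $a_{0,\ldots,0,1}=-1$, so the recurrence reads $c_n=c_{n-1}+(n-1)$, yielding $n(n-1)/2+1$ after telescoping from $c_3=4$.

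For the two harmonic-type identities I would argue as follows. Writing $d_n:=a_{n-3,0,1,0,\ldots,0}$ and using the row already established for $(n-2,1,0,\ldots,0)$ at level $n-1$, Proposition~\ref{prop2.1} yields
\[d_n=(-1)^{n-2}(n-2)!(n-2)-(n-1)\,d_{n-1}.\]
Substituting the ansatz $d_n=(-1)^n(n-1)!\,\sigma_n$ with $\sigma_n:=\sum_{p=1}^{n-1}(p-1)/p$ reduces the recurrence to the one-line identity $\sigma_n-\sigma_{n-1}=(n-2)/(n-1)$, which is exactly the summand at $p=n-1$. The same strategy handles $e_n:=a_{n-4,2,0,\ldots,0}$: Proposition~\ref{prop2.1} gives
\[e_n=(n-3)(-1)^{n-2}(n-2)!(n-2)-(n-1)\,e_{n-1},\]
and the ansatz $e_n=(-1)^n(n-1)!\,\tau_n$ with $\tau_n:=\sum_{p=1}^{n-1}(p-2)(p-1)/p$ reduces to $\tau_n-\tau_{n-1}=(n-3)(n-2)/(n-1)$.

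The base cases $a_1=-1$, $a_{0,1}=-1$, $a_{0,0,1}=-1$ are instances of the rule $a_{0,\ldots,0,1}=-1$ of Proposition~\ref{prop2.1}, while $a_{1,1,0}=4$ and $a_{0,2,0,0}=4$ follow from one further application of the recursion and are checked to agree with the closed forms by direct arithmetic. The main obstacle is not analytic but combinatorial: one must track carefully how the indices shift when Proposition~\ref{prop2.1} drops from level $n$ to level $n-1$, and verify that the resulting tuple falls back into the same family (so that the recurrence is scalar rather than a system). Once this bookkeeping is in place, each induction is essentially a single-line verification.
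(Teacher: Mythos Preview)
Your proposal is correct and follows essentially the same approach as the paper: derive a scalar recurrence from Proposition~\ref{prop2.1} for each of the five families, feed in the already-established earlier rows, and close by induction from the small base cases. The only cosmetic difference is that you phrase the last two inductions via the substitution $d_n=(-1)^n(n-1)!\,\sigma_n$ (resp.\ $\tau_n$), whereas the paper verifies the inductive step by direct computation; these are the same argument.
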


\begin{proof}
By Proposition \ref{prop2.1}, 
\begin{align*}
&\forall n\geq 2,&a_{n,0,\ldots,0}&=-(n-1)a_{n-1,0,\ldots,0},\\
&\forall n\geq 3,&a_{n-2,1,0,\ldots,0}&=(n-1)a_{n-1,0,\ldots,0}-(n-1)a_{n-3,1,0,\ldots,0}\\
&&&=(-1)^{n-1}(n-1)!-(n-1)a_{n-3,1,0,\ldots,0},\\
&\forall n\geq 3,&a_{\underbrace{\mbox{\scriptsize 1,0,\ldots,0,1,0}}_{n}}&=a_{\underbrace{\mbox{\scriptsize  1,0,\ldots,0,1,0}}_{n-1}}-
(n-1)a_{0,\ldots,0,1}a_{\underbrace{\mbox{\scriptsize 1,0,\ldots,0,1,0}}_{n-1}}+n-1,\\
&\forall n\geq 4,&a_{n-3,0,1,0,\ldots,0}&=a_{n-3,1,0,\ldots,0}(n-1)a_{n-4,0,1,0,\ldots,0}\\
&&&=(-1)^n(n-2)!(n-2)-(n-1)a_{n-4,0,1,0,\ldots,0}.
\end{align*}
Moreover,
\begin{align*}
a_{1}&=-1,&a_{0,1}&=-1,&a_{1,1,0}&=4,&a_{0,0,1}&=-1.
\end{align*}
The four first results follow from an easy induction.
For $a_{n-4,2,0,\ldots,0}$, we proceed by induction on $n$. For $n=0$, still by Proposition \ref{prop2.1}, 
\[a_{0,2,0,0}=a_{1,1,0}=4\]
whereas
\[3!\sum_{1\leq p_1\leq 3}\frac{(p_1-1)(p_1-2)}{p_1}=3!\frac{2}{3}=4.\]
Let us assume the result at rank $n-1$, with $n\geq 5$. 
\begin{align*}
a_{n-4,2,0,\ldots,0}&=(n-3)a_{n-3,1,0,\ldots,0}-(n-1)a_{n-5,2,0,\ldots,0}\\
&=(n-3)(-1)^n(n-2)!(n-2)-(-1)^{n-1}(n-1)(n-2)!\sum_{1\leq p_1\leq n-2}\frac{(p_1-1)(p_1-2)}{p_1}\\
&=(-1)^n(n-1)! \frac{(n-2)(n-3)}{n-1+}(-1)^n(n-1)(n-2)!\sum_{1\leq p_1\leq n-2}\frac{(p_1-1)(p_1-2)}{p_1}\\
&=(-1)^n(n-1)!\sum_{1\leq p_1\leq n-1}\frac{(p_1-1)(p_1-2)}{p_1}. \qedhere
\end{align*}\end{proof}

Let us now describe these coefficients as iterated sums. We shall need the following notions:

\begin{defi}
A dominant sequence of length $k\geq 1$ is a sequence $(i_1,\ldots,i_k)\in \N^k$ such that $i_k\neq 0$.
If $(i_1,\ldots,i_n)\in \N^n$ is nonzero, the dominant of $(i_1,\ldots,i_n)$ is the dominant sequence
\begin{align*}
\partial(i_1,\ldots,i_n)&=(i_1,\ldots,i_m)&\mbox{where}&&m&=\max\{j\mid i_j\neq 0\}
\end{align*}
By convention, $\partial(0,\ldots,0)$ is the empty sequence, denoted by $()$.
\end{defi}

\begin{defi}
Let $(i_2,\ldots,i_n)\in \N^{n-1}$. Its weight is $\omega(i_2,\ldots,i_n)=1i_2+\ldots+(n-1)i_n-1$.
\end{defi}

\begin{remark}
Obviously, $\omega \circ \partial=\omega$.
\end{remark}

We finally introduce a family of polynomials, indexed by dominant sequences of weight $\geq 1$:

\begin{defi}
We define $P_{i_2,\ldots,i_n}\in \mathbb{Z}[p_1,\ldots,X_{\omega(i_2,\ldots,i_n)}]$ for any dominant sequence $(i_2,\ldots,i_n)$ of weight $\geq 1$ by the following:
\begin{align*}
P_2&=(X_1-2)(X_1-1),\\
P_{0,1}&=X_1-1,\\
P_{i_2,\ldots,i_n}&=\sum_{j\geq 3,i_j\geq 1}(i_{j-1}+1)P_{\partial(i_2,\ldots,i_{j-1}+1,i_j-1,\ldots,i_n)}\\
&+\delta_{i_2\geq 1}(X_{\omega(i_2,\ldots,i_n)}-2i_2-\ldots -ni_n+2)P_{\partial(i_2-1,i_3,\ldots,i_n)}&\mbox{if }\omega(i_2,\ldots,i_n)\geq 3,
\end{align*}
where
\[\delta_{i_2\geq 1}=\begin{cases}
1\mbox{ if }i_2\geq 1,\\
0\mbox{ otherwise}.
\end{cases}\]
\end{defi}

\begin{remark}
If $j\geq 3$ and $i_j\geq 1$,
\[\omega(i_2,\ldots,i_{j-1}+1,i_j-1,\ldots,i_n)=\omega(i_2,\ldots,i_n)-1\]
and if $i_2\geq 1$,
\[\omega(i_2-1,i_3,\ldots,i_n)=\omega(i_2,\ldots,i_n)-1.\]
Therefore, $P_{i_2,\ldots,i_n}$ is well-defined for any $(i_2,\ldots,i_n)$ of weight $\geq 2$.
\end{remark}

\begin{remark}
An easy induction proves that for any dominant sequence $(i_2,\ldots,i_n)$ of weight $\geq 1$, 
\[\deg(P_{i_2,\ldots,i_n})=i_2+\ldots+i_n,\]
and that $P_{i_2,\ldots,i_n}$ is a multiple of $X_1-1$. 
\end{remark}

\begin{lemma}\label{lem2.6}
Let $(i_2,\dots,i_n)$ be a dominant sequence of weight $N\geq 2$. If $p_1,\ldots,p_N\in \N$, such that
\[1\leq p_1<\ldots<p_N\leq 2i_2+\ldots+ni_n-2,\]
then $P_{i_2,\ldots,i_n}(p_1,\ldots,p_N)=0$.
\end{lemma}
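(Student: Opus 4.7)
The plan is to induct on the weight $N=\omega(i_2,\ldots,i_n)$. For conceptual cleanliness I would extend the statement down to $N=1$: the only dominant sequences of weight $1$ are $(2)$ and $(0,1)$, with respective bounds $B=2$ and $B=1$, and the explicit formulas $P_2=(X_1-2)(X_1-1)$ and $P_{0,1}=X_1-1$ vanish on $\{1,2\}$ and $\{1\}$ respectively, so this base case is immediate.

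For the inductive step, fix a dominant sequence of weight $N\geq 2$ and set $B=2i_2+3i_3+\ldots+ni_n-2$. The key bookkeeping observation is that every polynomial appearing on the right-hand side of the recursion defining $P_{i_2,\ldots,i_n}$ has weight exactly $N-1$, and hence is a polynomial in $X_1,\ldots,X_{N-1}$ alone. A routine verification from the definitions of $\omega$ and $B$ shows that each $P_{\partial(i_2,\ldots,i_{j-1}+1,i_j-1,\ldots,i_n)}$ appearing in the first sum carries its own bound equal to $B-1$, while the polynomial $P_{\partial(i_2-1,i_3,\ldots,i_n)}$ paired with $X_N-B$ in the second term has bound $B-2$.

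Now fix $(p_1,\ldots,p_N)$ with $1\leq p_1<\ldots<p_N\leq B$. Since $p_{N-1}<p_N\leq B$ forces $p_{N-1}\leq B-1$, the induction hypothesis applied with bound $B-1$ makes every summand in the first sum vanish at $(p_1,\ldots,p_{N-1})$. For the second term $(X_N-B)\,P_{\partial(i_2-1,i_3,\ldots,i_n)}(X_1,\ldots,X_{N-1})$, I split into two cases: if $p_{N-1}\leq B-2$, the induction hypothesis directly annihilates the polynomial factor; otherwise $p_{N-1}=B-1$, which combined with $p_{N-1}<p_N\leq B$ forces $p_N=B$, so the linear factor $X_N-B$ vanishes. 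Either way the second term is zero, and the induction closes.

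The argument is essentially bookkeeping, so there is no real technical obstacle. The main subtlety worth highlighting is that the explicit linear factor $X_N-B$ in the recursion is precisely calibrated to cover the single extremal configuration $(p_{N-1},p_N)=(B-1,B)$ that escapes the inductive bound $B-2$ on $P_{\partial(i_2-1,\ldots)}$; this interlocking of the two decreased bounds $B-1$ and $B-2$ with the factor $X_N-B$ is exactly what makes the recursion tailor-made for the vanishing statement.
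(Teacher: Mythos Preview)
Your proof is correct and follows essentially the same route as the paper's own proof: induction on the weight $N$, with the base case at $N=1$ handled by the explicit formulas for $P_2$ and $P_{0,1}$, and the inductive step treating the two groups of terms in the recursion separately via the observation that the relevant bounds drop to $B-1$ and $B-2$. Your case split on $p_{N-1}$ is logically equivalent to the paper's split on $p_N$, and your identification of the linear factor as $X_N-B$ makes the argument slightly more transparent.
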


\begin{proof}
By induction on $N$, then $(i_2,\ldots,i_n)=(2)$ or $(0,1)$. In the first case, $2i_2+\ldots+ni_n-2=2$ and as $P_2=(X_1-1)(X_2-1)$, the result is obtained. 
In the second case, $2i_2+\ldots+ni_n-2=1$ and as $P_{0,1}=X_1-1$, the result is obtained. Let us assume the result at a rank $N-1$, with $N\geq 3$.
Let us consider $(p_1,\ldots,p_N)$ such that $1\leq p_1<\ldots<p_N\leq 2i_2+\ldots+ni_n-2$. If $j\geq 3$ and $i_j\geq 1$, then 
\[1\leq p_1<\ldots<p_{n-1}\leq 2i_2+\ldots+ni_n-3=2i_2+\ldots+(j-1)(i_{j-1}+1)+j(i_j+1)+\ldots+ni_n-2.\]
By the induction hypothesis, $P_{\pi(i_2,\ldots,i_{j-1}+1,i_j+1,\ldots,i_n)}(p_1,\ldots,p_{N-1})=0$. Let us assume that $i_2\geq 1$. If $p_N\leq 2i_2+\ldots+ni_n-3$, then 
\[1\leq p_1<\ldots<p_{N-1}\leq (i_2-1)+3i_3+\ldots+ni_n-2.\]
By the induction hypothesis, $P_{\partial(i_2-1,\ldots,i_n)}(p_1,\ldots,p_{N-1})=0$. Otherwise, $p_N=2i_2+\ldots+ni_n-2$ and $p_{\omega(i_2,\ldots,i_n)}-2i_2-\ldots -ni_n+2=0$.
Combining all these, we obtain $P_{i_2,\ldots,i_n}(p_1,\ldots,p_N)=0$.
\end{proof}

\begin{theo}
Let $(i_1,\ldots,i_n)\in \N^n$ such that $1i_1+\ldots+ni_n=n$ and $N=\omega(i_2,\ldots,i_n)\geq 1$. Then
\[a_{i_1,\ldots,i_n}=(-1)^{i_1+\ldots+i_n}(n-1)!\sum_{1\leq p_1<\ldots<p_N\leq n-1}
\frac{P_{\partial(i_2,\ldots,i_n)}(p_1,\ldots,p_N)}{p_1\ldots p_N}.\]
\end{theo}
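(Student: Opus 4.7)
The proof is by induction on $n$. The base cases $N=1$, corresponding to the dominant sequences $(2)$ and $(0,1)$, are precisely the last two identities of Lemma \ref{lem2.2} applied to the families $a_{n-4,2,0,\ldots,0}$ and $a_{n-3,0,1,0,\ldots,0}$. The case $(i_1,\ldots,i_n)=(0,\ldots,0,1)$ with $n\geq 4$, for which Proposition \ref{prop2.1} gives $a=-1$ directly, must be handled separately: an easy induction using the defining recurrence shows that $P_{0,\ldots,0,1}=X_1-1$ (depending only on $X_1$), so the sum in the theorem collapses to the unique increasing $N$-tuple in $[1,n-1]$ with $p_1\geq 2$, namely $(2,3,\ldots,n-1)$, giving $-(n-1)!\cdot\tfrac{1}{(n-1)!}=-1$.

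For the main inductive step, fix $(i_1,\ldots,i_n)$ with $i_n=0$ and $N\geq 2$, and apply Proposition \ref{prop2.1} to express $a_{i_1,\ldots,i_n}$ in terms of $a$-coefficients at level $n-1$. A direct check shows that each term $a_{i_1,\ldots,i_{j-1}+1,i_j-1,\ldots,i_{n-1}}$ (for $j\geq 2$, $i_j\geq 1$) has weight parameter $N-1$, while $a_{i_1-1,i_2,\ldots,i_{n-1}}$ has weight parameter $N$; both are at least $1$ when $N\geq 2$, so the induction hypothesis applies. Substituting the inductive formulas and cancelling the common factor $(-1)^D(n-2)!$ (with $D=i_1+\ldots+i_n$), the identity reduces to
\begin{equation*}
(n-1)\Sigma_{n-1} \;=\; \sum_{j\geq 2,\,i_j\geq 1}(i_{j-1}+1)\,\Sigma^{(j)}_{n-2}\;+\;(n-1)\delta_{i_1\geq 1}\,\Sigma_{n-2},
\end{equation*}
where $\Sigma_{n-1}$ and $\Sigma_{n-2}$ denote the sums of $P_{\partial(i_2,\ldots,i_n)}(p)/(p_1\cdots p_N)$ over increasing $N$-tuples in $[1,n-1]$ and $[1,n-2]$, and $\Sigma^{(j)}_{n-2}$ is the analogous sum over $(N-1)$-tuples in $[1,n-2]$ of $P_{\partial(i_2,\ldots,i_{j-1}+1,i_j-1,\ldots,i_n)}$.

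The crux is the splitting $\Sigma_{n-1}=\Sigma_{n-2}+\tfrac{1}{n-1}\sum_q P_{\partial(i_2,\ldots,i_n)}(q,n-1)/\prod q_i$ (according to whether $p_N=n-1$), combined with the specialisation of the defining recurrence for $P_{\partial(i_2,\ldots,i_n)}$ at $X_N=n-1$. The relation $2i_2+\ldots+ni_n=n-i_1$ (from $\sum_j j i_j=n$) forces the coefficient $(X_N-2i_2-\ldots-ni_n+2)$ to evaluate to $i_1+1$ at $X_N=n-1$, so the $j=2$ term in the defining recurrence (the one carrying the $X_N$-linear factor) produces exactly the $j=2$ summand $(i_1+1)P_{\partial(i_2-1,i_3,\ldots,i_n)}(q)$ of $\sum_{j\geq 2,\,i_j\geq 1}(i_{j-1}+1)P_{\partial(\ldots)}(q)$, while the $j\geq 3$ terms match directly. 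The required identity then collapses to $(n-1)\Sigma_{n-2}=(n-1)\delta_{i_1\geq 1}\Sigma_{n-2}$: trivial when $i_1\geq 1$, and when $i_1=0$ the equality $2i_2+\ldots+ni_n-2=n-2$ lets Lemma \ref{lem2.6} kill $P_{\partial(i_2,\ldots,i_n)}(q)$ on every $N$-tuple in $[1,n-2]$, yielding $\Sigma_{n-2}=0$. The main obstacle is the precise matching between the $j=2$ term of the $P$-recurrence and the boundary $p_N=n-1$ contribution; once that bookkeeping is in place, the remainder is direct substitution.
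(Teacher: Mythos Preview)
Your argument is correct and follows essentially the same route as the paper's proof. Both proofs rest on the same three ingredients: Proposition~\ref{prop2.1} for the recurrence on the $a$-coefficients, the defining recurrence of $P_{\partial(i_2,\ldots,i_n)}$ specialised at $X_N=n-1$ (where the linear coefficient $X_N-2i_2-\cdots-ni_n+2$ becomes exactly $i_1+1$), and Lemma~\ref{lem2.6} to kill $\Sigma_{n-2}$ when $i_1=0$; the base cases are supplied by Lemma~\ref{lem2.2}. The only difference is organisational: the paper runs an outer induction on the weight $N$ and an inner induction on $i_1$, whereas you induct directly on $n$ and treat $N=1$ and the tuple $(0,\ldots,0,1)$ as standing base cases. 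Since for a fixed dominant $(i_2,\ldots,i_k)$ the quantities $n$ and $i_1$ differ by a constant, the two induction schemes are equivalent, and your single induction on $n$ is arguably a cleaner packaging of the same computation.
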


\begin{proof}
In order to simplify the proof, we put 
\[b_{i_1,\ldots,i_n}=\frac{(-1)^{i_1+\ldots+i_n}}{(n-1)!}a_{i_1,\ldots,i_n}.\]
These coefficients satisfy the induction
\[b_{i_1,\ldots,i_n}=\sum_{j\geq 2,\: i_j \geq 1}(i_{j-1}+1)\frac{1}{n-1}b_{i_1,\ldots,i_{j-1}+1,i_j-1,\ldots,i_{n-1}}
+\delta_{i_1\geq 1}b_{i_1-1,i_2,\ldots,i_{n-1}}.\]
Our aim is no to prove that
\[b_{i_1,\ldots,i_n}=\sum_{1\leq p_1<\ldots<p_N\leq n-1}\frac{P_{\partial(i_2,\ldots,i_n)}(p_1,\ldots,p_N)}{p_1\ldots p_N}.\]
We proceed by induction on the weight $N=\omega(i_2,\ldots,i_n)$. If $N=2$, two cases occur,
If $(i_2,\ldots,i_n)=(2)$ or $(0,1)$. These are covered by Lemma \ref{lem2.2}.
Let us now assume the result for all sequences of weight $N-1$, with $N\geq 3$. We proceed by induction on $i_1=n-2i_2-\ldots-ni_n$. By Lemma \ref{lem2.6},
\begin{align*}
\sum_{1\leq p_1<\ldots<p_N\leq n-1} \frac{P_{\partial(i_2,\ldots,i_n)}(p_1,\ldots,p_N)}{p_1\ldots p_N}
&=\sum_{1\leq p_1<\ldots<p_{N-1}\leq n-2} \frac{P_{\partial(i_2,\ldots,i_n)}(p_1,\ldots,p_{N-1},n-1)}{p_1\ldots p_{N-1}(n-1)}.
\end{align*}
Moreover,
\begin{align*}
b_{0,i_2,\ldots,i_n}&=\sum_{j\geq 2,\: i_j\geq 1}\frac{i_{j-1}+1}{n-1}\sum_{1\leq p_1<\ldots<p_{N-1}\leq n-2} \frac{P_{i_2,\ldots,i_{j-1}+1,i_j-1,\ldots,i_N}(p_1,\ldots,p_{N-1})}{p_1\ldots p_{N-1}}\\
&=\sum_{j\geq 3,\: i_j\geq 1}\frac{i_{j-1}+1}{n-1}\sum_{1\leq p_1<\ldots<p_{N-1}\leq n-2} \frac{P_{i_2,\ldots,i_{j-1}+1,i_j-1,\ldots,i_N}(p_1,\ldots,p_{N-1})}{p_1\ldots p_{N-1}}\\
&+\delta_{i_2\geq 1}\frac{n-1-2i_2-\ldots-ni_n+2}{n-1}\sum_{1\leq p_1<\ldots<p_{N-1}\leq n-2} \frac{P_{i_2-1,\ldots,i_N}(p_1,\ldots,p_{N-1})}{p_1\ldots p_{N-1}}\\
&=\sum_{1\leq p_1<\ldots<p_{N-1}\leq n-2} \frac{P_{\partial(i_2,\ldots,i_n)}(p_1,\ldots,p_{N-1},n-1)}{p_1\ldots p_{N-1}(n-1)}\\
&=\sum_{1\leq p_1<\ldots<p_N\leq n-1} \frac{P_{\partial(i_2,\ldots,i_n)}(p_1,\ldots,p_N)}{p_1\ldots p_N}.
\end{align*}
Let us assume the result at rank $i_1-1$. 
\begin{align*}
b_{i_1,i_2,\ldots,i_n}&=\sum_{j\geq 2,\: i_j\geq 1}\frac{i_{j-1}+1}{n-1}\sum_{1\leq p_1<\ldots<p_{N-1}\leq n-2} \frac{P_{i_2,\ldots,i_{j-1}+1,i_j-1,\ldots,i_N}(p_1,\ldots,p_{N-1})}{p_1\ldots p_{N-1}}+b_{i_1-1,i_2,\ldots,i_n}\\
&=\sum_{1\leq p_1<\ldots<p_{N-1}\leq n-2} \frac{P_{\partial(i_2,\ldots,i_n)}(p_1,\ldots,p_{N-1},n-1)}{p_1\ldots p_{N-1}(n-1)}\\
&+\sum_{1\leq p_1<\ldots<p_N\leq n-2} \frac{P_{\partial(i_2,\ldots,i_n)}(p_1,\ldots,p_N)}{p_1\ldots p_N}\\
&=\sum_{1\leq p_1<\ldots<p_N\leq n-1} \frac{P_{\partial(i_2,\ldots,i_n)}(p_1,\ldots,p_N)}{p_1\ldots p_N}.
\end{align*}
So the property is true at all rank $N\geq 2$.
\end{proof}

\begin{remark}
\begin{enumerate}
\item If $(i_2,\ldots,i_n)=()$, then $a_{n,0,\ldots,0}=(-1)^n (n-1)$. We can put $\omega(())=0$ and $P_{()}=1$. The result then holds also in this case, by convention.
\item If $(i_2,\ldots,i_n)=(1)$, then $a_{n-2,1,0,\ldots,0}=(-1)^{n-1}(n-1)!(n-1)$. The result holds if we take $\omega(1)=1$ and $P_{1}=p_1$. 
\end{enumerate}
\end{remark}

\begin{example}
Easy inductions prove that
\begin{align*}
P_{0,\ldots,0,1}&=X_1-1,&P_n&=(X_1-1)(X_1-1)(X_2-4)\ldots (X_{n-1}-2n+2).
\end{align*}
Here are other examples:
\begin{align*}
P_{1,1}&=(2X_1+X_2-7)(X_1-1)\\
P_{1,0,1}&=(2X_1+X_2+X_3-11)(X_1-1),\\
P_{2,1}&=(3X_1X_2+2X_1X_3+X_2X_3-22X_1-11X_2+7X_3+59)(X_1-1),\\
P_{1,0,0,1}&=(2X_1+X_2+X_3+X_4-16)(X_1-1),\\
P_{0,1,1}&=(6X_1+3X_2+X_3-25)(X_1-1),\\
P_{2,0,1}&=(3X_1X_2+2X_1X_3+X_2X_3+2X_1X_4+X_2X_4+X_3X_4-34X_1\\
&-17X_2-13X_3-11X_4+125)(X_1-1),\\
P_{1,2}&=(6X_1X_2+4X_1X_3+2X_2X_3+2X_1X_4+X_2X_4-56X_1-28X_2-14X_3\\
&-7X_4+160)(X_1-1),\\
P_{3,1,0,0,0}&=(4X_1X_2X_3+3X_1X_2X_4+2X_1X_3X_4+X_2X_3X_4-45X_1X_2-30X_1X_3-15X_2X_3\\
&-22X_1X_4-11X_2X_4-7X_3X_4+250X_1+125X_2+81X_3+59X_4-605)(X_1-1).
\end{align*}
\end{example}

\begin{example}\begin{enumerate}
\item In particular, for any $n\geq 4$,
\[a_{n-4,0,0,1,0,\ldots,0}=-(n-1)!\sum_{2\leq i_1<i_2\leq n-1} \frac{i_1-1}{i_1i_2}.\]
We recover Entry A122105 of the OEIS \cite{Sloane}. 
\item We also obtain
\begin{align*}
a_{1,0,\dots,0,1,0}&=(n-1)!\sum_{2\leq i_1<\ldots<i_{n-3}\leq n-1}\frac{i_1-1}{i_1\ldots i_{n-3}}\\
&=(n-1)!\left(\frac{4}{(n-1)!}+\sum_{j=3}^{n-1} \frac{j}{(n-1)!}\right)\\
&=4+\sum_{j=3}^{n-1}j\\
&=\frac{n(n-1)}{2}+1.
\end{align*}
We recover the formula of Lemma \ref{lem2.2}. Here, $j$ corresponds to the missing element of $\{2,\ldots,n-1\}$ in $\{i_2,\ldots,i_{n-3}\}$,
the case $j=2$ (the unique case where $i_1\neq 2$) being treated separately. 
\end{enumerate}\end{example}

\begin{notation}
We put, for any $k,n\geq 1$, 
\begin{align*}
H_n^{(k)}&=\sum_{1\leq p_1<\ldots <p_k\leq n}\frac{1}{p_1\ldots p_k}.
\end{align*}
\end{notation}

For a fixed dominant sequence $(i_2,\ldots,i_n)$, using technical manipulations of sums, the sequence $(b_{n-2i_2-\ldots-ni_n,i_2,\ldots,i_n,0,\ldots,0})_{n\geq 2i_2+\ldots+n_in}$
is the sum of a polynomial sequence (with coefficients in $\mathbb{Q}$) and a $\mathbb{Q}$-linear span of sequences $(n^kH_{n-1}^{(l)})$, with $k\geq 0$ and $l\geq 1$.

\begin{example} We obtain
\begin{align*}
&\forall n\geq 3,&b_{n-3,0,1,\ldots,0}&=\sum_{1\leq p_1\leq n-1}\frac{p_1-1}{p_1}\\
&&&=n-1-H_{n-1}^{(1)},\\
&\forall n\geq 4,&b_{n-4,2,0,0,\ldots,0}&=\sum_{1\leq p_1\leq n-1}\frac{(p_1-1)(p_1-2)}{p_1}\\
&&&=\frac{(n-1)(n-6)}{2}+2H_{n-1}^{(1)},\\
&\forall n\geq 5,&b_{n-5,1,1,0,\ldots,0}&=\sum_{1\leq p_1<p_2\leq n-1}\frac{(2p_1+p_2-7)(p_1-1)}{p_1p_2}\\
&&&=(n-1)(n-10)+(10-n)H_{n-1}^{(1)}+7H_{n-1}^{(2)},\\
&\forall n\geq 6,&b_{n-6,3,0,\ldots,0}&=\frac{(n^2-17n+90)(n-1)}{6}+(2n-14)H^{(1)}_{n-1}-8H^{(2)}_{n-1},\\
&\forall n\geq 7,&b_{n-7,2,1,0,\ldots,0}&=\frac{(n^2-25n+210)(n-1)}{2}\\
&&&+\frac{-n^2+29n-109}{2}H_{n-1}^{(1)}+(7n-99)H_{n-1}^{(2)}-59H_{n-1}^{(3)},\\
&\forall n\geq 8,&b_{n-8,4,0,\ldots,0}&=\frac{(n^3-33n^2+434n-2520)(n - 1)}{24}\\
&&&+(n^2-19n+108)H_{n-1}^{(1)}+(-8n+92)H_{n-1}^{(2)}+48H_{n-1}^{(3)}.
\end{align*} \end{example}

\begin{prop}
Let $k\geq 2$. Then
\[b_{n-k,0,\ldots,0,1,0,\ldots,0}=n-1-\sum_{i=0}^{k-2}H_{n-1}^{(i)}.\]
Note that the 1 is in position $k$ in $(n-k,0,\ldots,0,1,0,\ldots,0)$.
\end{prop}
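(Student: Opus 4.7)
The plan is to apply the polynomial formula from the preceding theorem, compute the polynomial $P_{0,\ldots,0,1}$ associated to the dominant sequence of length $k-1$, and then reorganize the resulting nested sum.

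I would first determine $P_{0,\ldots,0,1}$ from its recursive definition. For $k\geq 3$, applied to $(i_2,\ldots,i_k)=(0,\ldots,0,1)$, only the index $j=k$ contributes in the first sum, with coefficient $i_{k-1}+1=1$, and the second term vanishes since $i_2=0$. The substituted sequence $(0,\ldots,0,1,0)$ has dominant $(0,\ldots,0,1)$ of length $k-2$, so the recursion collapses to the identity
\[P_{0,\ldots,0,1}^{[k]}(X_1,\ldots,X_{k-2})=P_{0,\ldots,0,1}^{[k-1]}(X_1,\ldots,X_{k-3})\]
between polynomials of the same shape with one fewer variable. Starting from the explicit base value $P_{0,1}=X_1-1$, induction on $k$ yields $P_{0,\ldots,0,1}(X_1,\ldots,X_{k-2})=X_1-1$ for all $k\geq 3$.

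By the polynomial formula of the preceding theorem, it then follows that
\[b_{n-k,0,\ldots,0,1,0,\ldots,0}=\sum_{1\leq p_1<\ldots<p_{k-2}\leq n-1}\frac{p_1-1}{p_1 p_2\cdots p_{k-2}}.\]
Writing $\frac{p_1-1}{p_1\cdots p_{k-2}}=\frac{1}{p_2\cdots p_{k-2}}-\frac{1}{p_1\cdots p_{k-2}}$, the second piece sums to $-H_{n-1}^{(k-2)}$. In the first piece, for each fixed tuple $p_2<\ldots<p_{k-2}$ the index $p_1$ runs through $\{1,\ldots,p_2-1\}$, contributing the multiplicity $p_2-1$; after a renaming of variables, this sum equals $b_{n-(k-1),0,\ldots,0,1,0,\ldots,0}$, where the $1$ is now at position $k-1$.

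The argument concludes by induction on $k$, with base case $k=2$ provided by Lemma \ref{lem2.2}, which gives $b_{n-2,1,0,\ldots,0}=n-1$. (For $k=3$ the first piece above degenerates to $\sum_{1\leq p_1\leq n-1}1=n-1$, consistent with this value.) The recurrence $b^{(k)}_n=b^{(k-1)}_n-H_{n-1}^{(k-2)}$ then unwinds to the stated formula $n-1-\sum_{i=1}^{k-2}H_{n-1}^{(i)}$, which agrees with the displayed expression under the convention $H_{n-1}^{(0)}=0$. The main technical step is the collapse identity for $P_{0,\ldots,0,1}$ via the recursive definition; once this is in hand, the remaining combinatorial unwinding is routine.
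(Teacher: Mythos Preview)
Your proof is correct and follows essentially the same route as the paper: both use the identity $P_{0,\ldots,0,1}=X_1-1$ to express $b_{n-k,0,\ldots,0,1,0,\ldots,0}$ as the nested sum $\sum \frac{p_1-1}{p_1\cdots p_{k-2}}$, split off $-H_{n-1}^{(k-2)}$, and recognize the remainder as the $(k-1)$-case after summing over $p_1$. You are slightly more explicit than the paper in deriving $P_{0,\ldots,0,1}=X_1-1$ from the recursion (the paper merely states it as an easy induction), and you correctly flag that the lower index $i=0$ in the statement's sum must be read with the convention $H_{n-1}^{(0)}=0$.
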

\begin{proof}
By induction on $k$. If $k=2$, by Lemma \ref{lem2.2}, $b_{n-2,2,0,\ldots,0}=n-1$. Let us assume the result at rank $k-1$. As $P_{0,\ldots,0,1}=X_1-1$, 
\begin{align*}
b_{n-k,0,\ldots,0,1,0,\ldots,0}&=\sum_{1\leq p_1<\ldots<p_{k-2}\leq n-1} \frac{p_1-1}{p_1\ldots p_{k-2}}\\
&=-H_{n-1}^{(k-2)}+\sum_{1\leq p_1<\ldots<p_{k-2}\leq n-1} \frac{1}{p_2\ldots p_{k-2}}\\
&=-H_{n-1}^{(k-2)}+\sum_{1\leq p_2\ldots<p_{k-2}\leq n-1} \frac{p_2-1}{p_2\ldots p_{k-2}}\\
&=-H_{n-1}^{(k-2)}+b_{n-k+1,0,\ldots,1,0,\ldots,0}
\end{align*}
and the result follows. Note that in the last line of this computation, the 1 is in position $k-1$ in $(n-k+1,0,\ldots,1,0,\ldots,0)$.
\end{proof}

\bibliographystyle{amsplain}
\bibliography{biblio}

\providecommand{\bysame}{\leavevmode\hbox to3em{\hrulefill}\thinspace}
\providecommand{\MR}{\relax\ifhmode\unskip\space\fi MR }
\providecommand{\MRhref}[2]{%
  \href{http://www.ams.org/mathscinet-getitem?mr=#1}{#2}
}
\providecommand{\href}[2]{#2}
\begin{thebibliography}{10}

\bibitem{Cartier2010}
Pierre Cartier, \emph{Vinberg algebras, {Lie} groups and combinatorics}, Quanta
  of maths. Conference on non commutative geometry in honor of Alain Connes,
  Paris, France, March 29--April 6, 2007, Providence, RI: American Mathematical
  Society (AMS); Cambridge, MA: Clay Mathematics Institute, 2010, pp.~107--126
  (English).

\bibitem{Chapoton2001}
Fr\'{e}d\'{e}ric Chapoton and Muriel Livernet, \emph{Pre-{L}ie algebras and the
  rooted trees operad}, Internat. Math. Res. Notices (2001), no.~8, 395--408.

\bibitem{ConnesMoscovici98}
A.~Connes and H.~Moscovici, \emph{Hopf algebras, cyclic cohomology and the
  transverse index theorem}, Commun. Math. Phys. \textbf{198} (1998), no.~1,
  199--246 (English).

\bibitem{Connes98}
Alain Connes and Dirk Kreimer, \emph{Hopf algebras, renormalization and
  noncommutative geometry}, Comm. Math. Phys. \textbf{199} (1998), no.~1,
  203--242.

\bibitem{Foissy55}
Lo{\"{\i}}c Foissy, \emph{Examples of {C}om-{P}re{L}ie {H}opf algebras},
  arXiv:1501.06375, 2015.

\bibitem{Foissy28}
Lo{\"\i}c Foissy, \emph{The {H}opf algebra of {F}liess operators and its dual
  pre-{L}ie algebra}, Comm. Algebra \textbf{43} (2015), no.~10, 4528--4552.

\bibitem{Foissy30}
\bysame, \emph{A pre-{L}ie algebra associated to a linear endomorphism and
  related algebraic structures}, Eur. J. Math. \textbf{1} (2015), no.~1,
  78--121.

\bibitem{Foissy56}
Lo{\"{\i}}c Foissy, \emph{Cofree {C}om-{P}re{L}ie {H}opf algebras},
  arXiv:1802.07642, 2018.

\bibitem{Grossman89}
Robert~L. Grossman and Richard~G. Larson, \emph{Hopf-algebraic structure of
  families of trees}, J. Algebra \textbf{126} (1989), no.~1, 184--210.

\bibitem{Grossman90}
\bysame, \emph{Hopf-algebraic structure of combinatorial objects and
  differential operators}, Israel J. Math. \textbf{72} (1990), no.~1-2,
  109--117.

\bibitem{Manchon2011}
Dominique Manchon, \emph{A short survey on pre-{Lie} algebras}, Noncommutative
  geometry and physics: Renormalisation, motives, index theory. Based on the
  workshop ``Number theory and physics'', Vienna, Austria, March 2009. Edited
  with the assistance of Harald Grosse and Steve Rosenberg, Z{\"u}rich:
  European Mathematical Society (EMS), 2011, pp.~89--102.

\bibitem{Mansuy2013}
Anthony Mansuy, \emph{Preordered forests, packed words and contraction
  algebras}, J. Algebra \textbf{411} (2014), 259--311 (English).

\bibitem{Menous2011}
Fr{\'e}d{\'e}ric Menous, \emph{Formulas for the {Connes}-{Moscovici} {Hopf}
  algebra}, Combinatorics and physics. Mini-workshop on renormalization,
  December 15--16, 2006, Conference on combinatorics and physics, March 19--23,
  2007, Bonn, Germany., Providence, RI: American Mathematical Society (AMS),
  2011, pp.~269--285 (English).

\bibitem{Oudom2005}
Jean-Michel Oudom and Daniel Guin, \emph{Sur l'alg\`ebre enveloppante d'une
  alg\`ebre pr\'e-{L}ie}, C. R. Math. Acad. Sci. Paris \textbf{340} (2005),
  no.~5, 331--336.

\bibitem{Sloane}
N.~J.~A. Sloane, \emph{The on-line encyclopedia of integer sequences},
  \url{https://oeis.org/}.

\bibitem{Takeuchi1971}
M.~{Takeuchi}, \emph{{Free Hopf algebras generated by coalgebras}}, {J. Math.
  Soc. Japan} \textbf{23} (1971), 561--582 (English).

\end{thebibliography}

\end{document}